\documentclass{amsart}

\parindent=10pt
\textheight=23cm
\textwidth=16cm
\parskip=1.5pt
\oddsidemargin=0pt
\evensidemargin=0pt
\topmargin=-20pt

\usepackage{amssymb}
\usepackage{amsmath}
\usepackage{mathrsfs}
\usepackage{dsfont}
\usepackage{url}

\newtheorem{theorem}{Theorem}
\newtheorem*{theorem*}{Theorem}
\newtheorem{lemma}{Lemma}
\newtheorem*{lemma*}{Lemma}
\numberwithin{equation}{section}

\begin{document}

\title[The Average Number of Goldbach Representations ]{The Average Number of Goldbach Representations}

\author[D. A. Goldston]{D. A. Goldston$^{*}$}

\address{Department of Mathematics and Statistics\\
 San Jos\'{e} State University\\
  California 95192-0103 \\
 United States of America}
\date{\today}
\email{daniel.goldston@sjsu.edu}

\thanks{$^{*}$ Research supported by National Science Foundation Grant DMS-1104434}

\author[Liyang Yang]{ Liyang Yang}

\address{Department of Mathematical Sciences \\
 Tsinghua University \\
 Beijing \\
 100084\\
 P. R. China}

\email{yly12@mails.tsinghua.edu.cn}


\subjclass[2000]{Primary 11N05; Secondary 11P32, 11N36}

\keywords{Hardy-Littlewood prime $k$-tuple conjecture; Singular series; Limit points of normalized differences between consecutive prime numbers}

\subjclass[2000]{Primary 11P32; Secondary }

\keywords{Goldbach numbers; prime numbers; singular series}

\begin{abstract} Assuming the Riemann Hypothesis, we obtain asymptotic formulas for the average number representations of an even integer as the sum of two primes. We use the method of Bhowmik and Schlage-Puchta and refine their results slightly to obtain a more recent result of Languasco and Zaccagnini, and a new result on a smoother average.
\end{abstract}

\maketitle

\thispagestyle{empty}

\section{Introduction} The Goldbach conjecture asserts that every even integer greater than 2 is equal to the sum of two primes. At present there is no proof of this conjecture in sight. Letting $g_2(n)$ denote the number of representations  of the positive integer $n$ as a sum of two primes, then Hardy and Littlewood conjectured that for even $n$
\begin{equation}  g_2(n) := \sum_{n=p+p'}1 \sim  \mathfrak{S}(n) \frac{n}{(\log n)^2}, \qquad \text{as} \quad n \to \infty.\end{equation}
Here $\mathfrak{S}(n)$ is a well-known  arithmetic function called the singular series that plays no role in this paper, although we note that
$\mathfrak{S}(n)=0$ if $n$ is odd, and $\mathfrak{S}(n)>1$ if $n$ is even. Thus we expect that there are many ways to represent a large even number as sums of two primes.

An easier question is to examine the average number of representations of an even number as a sum of two primes. In 1900 Landau \cite{Landau} proved that
\begin{equation} \label{g-average}\sum_{n\le x} g_2(n) \sim \frac12 \frac{x^2}{(\log x)^2}, \quad \text{as} \quad x\to \infty\end{equation}
which is consistent with the conjecture (1.1) since
\[ \sum_{n\le x}\mathfrak{S}(n) \sim x, \quad \text{as} \quad x\to \infty.\]
(This observation is due to Hardy and Littlewood  \cite{HLShahWilson} \cite{HLGoldbach}.)

In 1991 A. Fujii \cite{Fujii1} \cite{Fujii2} \cite{Fujii3} refined \eqref{g-average} considerably and showed that there is a second order term which depends on the zeros of the Riemann zeta-function.  As usual in this field, Fujii worked with a weighted sum for the number of Goldbach representations which also includes powers of prime. Let
\begin{equation} r_2(n) :=\sum_{m+m' =n}\Lambda(m)\Lambda(m').\end{equation}
Then Fujii proved that, assuming the Riemann Hypothesis,
\begin{equation} \label{Fujii} \sum_{n\le x} r_2(n) = \frac12 x^2 -2\sum_{\rho}\frac{x^{\rho +1}} {\rho(\rho+1)} + E_2(x),\end{equation}
where
\begin{equation} \label{AveError} E_2(x) =O((x\log x)^{\frac43}).\end{equation}
Here the sum is over the complex zeros of the Riemann zeta-function and the sum is absolutely convergent.

From work of Granville \cite{Gran1} \cite{Gran2} the question arose of finding the true size of the error term $E_2(x)$. This question was almost completely solved by Bhowmik and Schlage-Puchta \cite{Bhow} who proved on RH that
\[   E_2(x) \ll  x(\log x)^5, \]
and also unconditionally that
\[  \qquad E_2(x) =\Omega ( x\log\log x).\]
This lower bound arises from proving that there exist $n$ for which $r_2(n) > cn\log
\log n$ so that an indivivual term in the average already makes a contribution of this size.\footnote{ This result has been proved before several times, by Prachar \cite{Prachar} in 1951 and Giordano \cite{Giordano} in 2002.}  More recently Languasco and Zaccagnini \cite{Lang-Zac1} have made several contributions to this problem. On RH they improved  the error bound to
\[ E_2(x) \ll x(\log x)^3 .\]
They also recognized that the secondary term arises naturally from the error in the prime number theorem. In \cite{Lang-Zac2} Languasco and Zaccagnini introduced a Ces{\`a}ro weight into the counting formula and proved the remarkable unconditional formula, for $N\ge 2$,
\begin{equation}\label{1.6} \begin{split} \sum_{n\le N } r_2(n)\frac{(1-\frac{n}{N})^k}{\Gamma(k+1)} = \frac{N^2}{\Gamma(k+3)}& - 2\sum_\rho \frac{ \Gamma(\rho)}{\Gamma(\rho +k +2)}N^{\rho+1}  \\ & +\sum_{\rho_1}\sum_{\rho_2}\frac{\Gamma(\rho_1)\Gamma(\rho_2)}{\Gamma(\rho_1+\rho_2+k +1)} N^{\rho_1+\rho_2} + O_k(N^{\frac12}), \end{split} \end{equation} where $k>1$ is a real number. Here the sums are absolutely convergent for $k>\frac12$, and it is reasonable to expect for this formula to hold with possibly a larger error term in this range. Thus the oscillation in $E_2(x)$ (under RH) largely disappears when $r_2(n)$ is counted with a Ces{\`a}ro weight with $k>1$.

\section{Results}

In this paper we follow the method of
Bhowmik-Schlage-Puchta with one modification. This allows us to obtain the same result as Languasco and Zaccagnini.
\begin{theorem}[Languasco Zaccagnini] Assuming the Riemann Hypothesis, we have
\begin{align}\sum_{n\leqslant N}r_2(n)=\frac{1}{2}N^2-\sum_\rho\frac{N^{\rho+1}}{\rho(\rho+1)}+O(N\log^3N).\end{align}
\end{theorem}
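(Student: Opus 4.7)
The plan is to start from the convolution identity
\[ \sum_{n\le N} r_2(n) = \sum_{m\le N} \Lambda(m)\,\psi(N-m), \]
where $\psi(y)=\sum_{k\le y}\Lambda(k)$, and insert the truncated RH explicit formula $\psi(y)=y-\sum_{|\gamma|\le T} y^\rho/\rho+O(y\log^2(yT)/T + \log^2 y)$ with a height $T\asymp N$. This splits the total into a ``diagonal'' piece $\sum_{m\le N}\Lambda(m)(N-m)=\psi_1(N)$ and a ``zero'' piece $-\sum_\rho \rho^{-1}\sum_{m\le N}\Lambda(m)(N-m)^\rho$, plus truncation errors I will absorb at the end.

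The diagonal piece is known explicitly: under RH,
\[ \psi_1(N)=\tfrac12 N^2-\sum_\rho\frac{N^{\rho+1}}{\rho(\rho+1)}+O(N), \]
which already supplies $\tfrac12 N^2$ together with one copy of the sum over zeros. For the zero piece I would integrate by parts to write
\[ \sum_{m\le N}\Lambda(m)(N-m)^\rho=\rho\int_0^N\psi(t)(N-t)^{\rho-1}\,dt, \]
substitute the explicit formula for $\psi(t)$ again, and evaluate the resulting Beta integrals. The ``$t$'' contribution gives $N^{\rho+1}/(\rho+1)$, while the ``$t^{\rho'}/\rho'$'' contribution yields $\rho\,\Gamma(\rho)\Gamma(\rho'+1)N^{\rho+\rho'}/(\rho'\,\Gamma(\rho+\rho'+1))$. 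Dividing by $-\rho$ and summing then produces a second copy of $-\sum_\rho N^{\rho+1}/(\rho(\rho+1))$ together with the double sum
\[ D(N):=\sum_{\rho_1,\rho_2}\frac{\Gamma(\rho_1)\Gamma(\rho_2)}{\Gamma(\rho_1+\rho_2+1)}\,N^{\rho_1+\rho_2}, \]
exactly the structure visible in \eqref{1.6} at $k=0$.

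The principal obstacle is bounding $D(N)$ by $O(N\log^3 N)$. The strategy is Stirling in the form $|\Gamma(\tfrac12+it)|\asymp e^{-\pi|t|/2}$ versus $|\Gamma(1+i(t_1+t_2))|\asymp|t_1+t_2|^{1/2}e^{-\pi|t_1+t_2|/2}$, so that the kernel decays exponentially unless $\gamma_1,\gamma_2$ have the same sign, in which case it behaves like $|\gamma_1+\gamma_2|^{-1/2}$; meanwhile on RH one has $|N^{\rho_1+\rho_2}|=N$. The task reduces to estimating
\[ N\sum_{0<\gamma_1,\gamma_2\le T}\frac{N^{i(\gamma_1+\gamma_2)}}{\sqrt{\gamma_1+\gamma_2}}, \]
which I would attack by dyadic decomposition in $\gamma_1,\gamma_2$, the Riemann--von Mangoldt density $N(T)=\tfrac{T}{2\pi}\log T+O(\log T)$, and Cauchy--Schwarz; here lies the ``one modification'' of Bhowmik--Schlage-Puchta, namely treating the sum $\sum_\gamma N^{i\gamma}/\rho$ as a single object and invoking a mean-square/large-sieve-type bound so that only $\log^3 N$ is lost instead of their $\log^5 N$.

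The final step is bookkeeping: verify that the explicit-formula truncation errors (both the outer one in $\psi(N-m)$ and the one inside the Beta-integral substitution) contribute at most $O(N\log^3 N)$ with the choice $T\asymp N$, and combine with the $O(N)$ error from $\psi_1$. Assembling the two copies of the single sum over zeros from the diagonal piece and from the $N^{\rho+1}/(\rho+1)$ term, one arrives at the claimed asymptotic; the coefficient of the zero sum appearing here matches Fujii's \eqref{Fujii} and \eqref{1.6}, and I would rely on the refined estimate for $D(N)$ to reach the $O(N\log^3 N)$ error term.
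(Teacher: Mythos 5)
Your opening reduction is arithmetically sound: $\sum_{n\le N}r_2(n)=\sum_{m\le N}\Lambda(m)\psi(N-m)$, the diagonal piece is $\psi_1(N)$, and iterating the explicit formula does reproduce Fujii's main terms (with, correctly, the coefficient $2$ on the single zero sum) plus the $k=0$ case of the double sum in \eqref{1.6}. But the argument stalls exactly where all of the difficulty of the theorem lives, namely the claim $D(N)\ll N\log^3N$. The double sum at $k=0$ is not absolutely convergent --- the paper itself notes convergence only for $k>\tfrac12$. By Stirling the same-sign kernel is $\asymp(\gamma_1+\gamma_2)^{-3/2}$ (note that $\Gamma(\rho_1+\rho_2+1)=\Gamma(2+i(\gamma_1+\gamma_2))$ on RH, so the exponent is $-3/2$, not the $-1/2$ you wrote), and taking absolute values over $|\gamma_1|,|\gamma_2|\le T\asymp N$ gives roughly $N\cdot T^{1/2}\log^2T\asymp N^{3/2}\log^2N$. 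So you must extract essentially full cancellation from the phases $N^{i(\gamma_1+\gamma_2)}$, and the tools you name --- dyadic blocks, the Riemann--von Mangoldt count, Cauchy--Schwarz --- do not obviously deliver this: Cauchy--Schwarz discards the phase and returns you to the divergent absolute-value bound. Carrying out this step is essentially the whole content of Languasco and Zaccagnini's original argument (they keep the zeros inside an $L^2$ integral of $\sum_\rho\Gamma(\rho)z^{-\rho}$ with $z=N^{-1}-2\pi i\alpha$ and split according to the size of $|\gamma|$ relative to $|z|^{-1}$); as written, your proposal asserts the conclusion of that analysis rather than supplying it.

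For comparison, the paper never meets a double sum over zeros at all. Lemmas 1 and 2 isolate the main terms and reduce the theorem to showing $\int_0^1E_N(|S_0(\alpha)|^2)\,|I(-\alpha,N)|\,d\alpha\ll N\log^3N$; Gallagher's lemma converts $\int_{|\alpha|\le 1/(2h)}|S_0|^2$ into the mean square of $\psi(t+h)-\psi(t)-h$; the RH estimates of Cram\'er and Saffari--Vaughan give $J(x,h)\ll hx\log^2(2x/h)$; and the ``one modification'' of Bhowmik--Schlage-Puchta is the average over $x\in[N,2N]$ that tames the endpoint term $K(x,h)$ --- not, as you suggest, any treatment of $\sum_\gamma N^{i\gamma}/\rho$. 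Each dyadic range in $\alpha$ then costs $N\log^2N$ and there are $O(\log N)$ of them. If you wish to salvage your route you would need to import the Languasco--Zaccagnini mean-square lemma for $\sum_\rho\Gamma(\rho)z^{-\rho}$ wholesale, at which point you have reproduced their proof rather than found an alternative to it.
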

Next, we show that if one takes the Ces{\`a}ro average with $k=1$ then we obtain an error term that corresponds to what \eqref{1.6} would imply on RH if one could take $k=1$ in that result.
\begin{theorem} Assuming the Riemann Hypothesis, we have
\begin{equation}\label{eqxxx}  \sum_{n\le N}\left(1 - \frac{n}{N}\right)  r_2(n) = \frac16 N^2 -2\sum_{\rho} \frac{N^{\rho+1}}{\rho(\rho+1)(\rho +2)}  +O(N). \end{equation}
\end{theorem}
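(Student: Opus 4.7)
The starting point is the convolution identity
\begin{equation*}
N S(N) := \sum_{n \le N}(N-n)\,r_2(n) = \int_0^N \psi(t)\,\psi(N-t)\,dt,
\end{equation*}
obtained by unfolding $r_2(n) = \sum_{m+k=n}\Lambda(m)\Lambda(k)$ and using $\int_0^N \mathbf{1}_{m\le t}\mathbf{1}_{k\le N-t}\,dt = \max(N-m-k,0)$; equivalently it follows from two applications of Abel summation applied to $NS(N) = \int_0^N R(u)\,du$ with $R(u) = \sum_{n \le u} r_2(n)$. I would then substitute the explicit formula $\psi(u) = u - \sum_\rho u^\rho/\rho + O(1)$, valid under RH with the zero sum understood via truncation at height $T$ and $T \to \infty$, into both factors of the integrand and expand the product.

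Four types of contribution appear. The polynomial term yields $\int_0^N t(N-t)\,dt = N^3/6$. The two zero-polynomial cross terms are equal by the symmetry $t \mapsto N-t$; using the beta integral $\int_0^N t(N-t)^\rho\,dt = N^{\rho+2}/[(\rho+1)(\rho+2)]$, each evaluates to $-\sum_\rho N^{\rho+2}/[\rho(\rho+1)(\rho+2)]$, so together they supply precisely the factor $-2$ appearing in the statement. The double zero-sum
\begin{equation*}
\sum_{\rho_1,\rho_2}\frac{1}{\rho_1\rho_2}\int_0^N t^{\rho_1}(N-t)^{\rho_2}\,dt = \sum_{\rho_1,\rho_2}\frac{\Gamma(\rho_1)\Gamma(\rho_2)}{\Gamma(\rho_1+\rho_2+2)}N^{\rho_1+\rho_2+1}
\end{equation*}
is absolutely $O(N^2)$ under RH, and the $O(1)$ remainders of the two explicit formulae, integrated against $t$, $N-t$, and each other, contribute a further $O(N^2)$. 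Collecting all the pieces yields $NS(N) = N^3/6 - 2\sum_\rho N^{\rho+2}/[\rho(\rho+1)(\rho+2)] + O(N^2)$, and dividing by $N$ delivers the theorem.

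The main obstacle is establishing the absolute convergence of the double zero-sum. Since $|N^{\rho_1+\rho_2+1}| = N^2$ on RH, this reduces to convergence of $\sum_{\rho_1,\rho_2}|\Gamma(\rho_1)\Gamma(\rho_2)/\Gamma(\rho_1+\rho_2+2)|$, which by Stirling rests on the exponential decay $|\Gamma(\tfrac12+i\gamma)| \asymp e^{-\pi|\gamma|/2}$ outpacing the zero-counting growth $N(T) \ll T\log T$---this is exactly the absolute convergence proved by Languasco--Zaccagnini in their derivation of \eqref{1.6} for $k>1/2$. A companion technical point is that the zero sum in the explicit formula for $\psi$ is only conditionally convergent, so one must truncate to $|\gamma| \le T$, control the standard truncation error of size $O(t\log^2(tT)/T + \log t)$, and pass to the limit $T \to \infty$; each such error integrates to at most $O(N^2)$, comfortably within the final error budget. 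Direct use of Theorem~1 followed by Abel summation would only yield error $O(N\log^3 N)$, so the gain to $O(N)$ really requires the integral identity above together with the integrated cancellation exploited by the beta-function identities.
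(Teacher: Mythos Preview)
Your argument is correct and takes a genuinely different route from the paper. The paper stays within the Bhowmik--Schlage-Puchta framework: it writes the Ces\`aro average via the Fej\'er kernel as $\int_0^1 S_0(\alpha,x)^2 K(\alpha,N)\,d\alpha$ plus explicit main terms (Lemma~3), then uses the Gallagher/Parseval identity to convert that integral into $\frac1N\int|\sum_{t<n\le t+N}\Lambda_0(n)|^2\,dt$, and finally bounds this by the Cram\'er and Saffari--Vaughan estimates $I(N),\,J(x-N,N),\,K(x,N)\ll N^2$. No explicit formula for $\psi$ is inserted into a product; only the absolutely convergent formula for $\psi_1$ is used, and the $O(N)$ error is obtained as a pure upper bound.

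Your convolution identity $\sum_{n\le N}(N-n)r_2(n)=\int_0^N\psi(t)\psi(N-t)\,dt$ followed by the beta-integral expansion is instead the direct analogue of the Languasco--Zaccagnini derivation of \eqref{1.6}, specialised to $k=1$. Its advantage is structural: it actually exhibits the $O(N)$ term as $N^{-1}\sum_{\rho_1,\rho_2}\Gamma(\rho_1)\Gamma(\rho_2)\Gamma(\rho_1+\rho_2+2)^{-1}N^{\rho_1+\rho_2+1}$, which is precisely what \eqref{1.6} would predict at $k=1$. The paper's approach, by contrast, dovetails with the proof of Theorem~1 (same machinery, different kernel) and sidesteps the conditionally convergent explicit formula for $\psi$ entirely. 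One simplification worth noting in your scheme: once you write $\psi(t)=t+r(t)$ and isolate the cross terms via $\int_0^N(N-u)r(u)\,du=\int_0^N(\psi_1(v)-v^2/2)\,dv$, the remaining piece is $\int_0^N r(t)r(N-t)\,dt$, which Cauchy--Schwarz and Cram\'er's bound $\int_0^N r(t)^2\,dt\ll N^2$ dispatch immediately---this avoids the truncation bookkeeping altogether, at the cost of not displaying the double zero sum.
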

A result of this type was mentioned in \cite{Lang-Zac1}.
\section{The Main Terms in the Asymptotic Expansion}
Let $\Lambda_0(n) = \Lambda(n)-1$, where $\Lambda(n)$ is the von Mangoldt function defined by $\Lambda(n)=\log p$ if $n=p^m$, $p$ a prime, $m\ge 1$ an integer, and $\Lambda(n)=0$ otherwise. We always take $n$ and $N$ to be positive integers. Our sums always start at 1 unless specified otherwise. Consider the generating function
\begin{equation}\label{eq1.1} S_0(\alpha,x) = \sum_{n\le x} \Lambda_0(n) e(n\alpha),  \qquad e(u) = e^{2\pi i u}.\end{equation}
Letting $\psi(x)= \sum_{n\le x} \Lambda(n)$, then the prime number theorem takes the form $\psi(x) \sim x$ as $x\to \infty$, and
\begin{equation} \label{eq1.2} \sum_{n\le x}\Lambda_0(n) =\psi(x)-\lfloor x \rfloor = \psi(x) -x +O(1) .\end{equation}
Now
\begin{equation} \label{eq1.3} S_0(\alpha,x)^2 = \sum_{n_1,n_2\le x} \Lambda_0(n_1)\Lambda_0(n_2)e((n_1+n_2)\alpha) = \sum_{n\le 2x} R(n,x) e(n\alpha), \end{equation}
where
\[ R(n,x) = \sum_{\substack{n_1,n_2 \le x\\ n=n_1+n_2}}\Lambda_0(n_1)\Lambda_0(n_2).\]
We can recover $r_2(n)$ from $R(n,x)$ when $n\le x$, since
\begin{equation}\label{1.51}\begin{split} R(n,x) &=  \sum_{ n=n_1+n_2}\Lambda_0(n_1)\Lambda_0(n_2) \\& = r_2(n) -2 \sum_{n=n_1+n_2} \Lambda(n_1) + \sum_{n=n_1+n_2}1 \\& = r_2(n) -E_2(n),\end{split}\end{equation}
where
\begin{equation} \label{1.55} E_2(n) :=  2\psi(n-1) -(n-1)  .\end{equation}
Let
\begin{equation} \label{eq1.5} T(\alpha,N) = \sum_{|n|\le N} t(n) e(n\alpha).\end{equation}
Then,  for $x\ge N$,
\[\int_0^1 S_0(\alpha,x)^2 T(-\alpha,N)d\alpha = \sum_{n\le N} R(n,x)t(n),\]
and by \eqref{1.51} we have obtained the following result.
\begin{lemma}  We have, for any $x \ge N$,
\begin{equation}\label{eq1.6} \sum_{n\le N} \Big( r_2(n) -E_2(n)\Big)t(n) = \int_0^1 S_0(\alpha,x)^2 T(-\alpha,N)d\alpha . \end{equation}
\end{lemma}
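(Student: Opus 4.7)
The plan is to verify this identity by direct Fourier expansion. I substitute the finite trigonometric polynomial forms for $S_0(\alpha,x)^2$ and $T(-\alpha,N)$ into the integrand, interchange the (finite) sums with the integral, and apply the orthogonality relation $\int_0^1 e(k\alpha)\,d\alpha = \delta_{k,0}$. The entire argument is essentially bookkeeping, so there is no genuine analytic obstacle.

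First I would use \eqref{eq1.3} and \eqref{eq1.5} to rewrite the right-hand side of \eqref{eq1.6} as
\[ \sum_{n\le 2x}\,\sum_{|m|\le N} R(n,x)\,t(m) \int_0^1 e((n-m)\alpha)\,d\alpha, \]
which collapses under orthogonality to $\sum_{n} R(n,x)\,t(n)$, with $n$ subject simultaneously to $1\le n\le 2x$ and $|n|\le N$. The hypothesis $x\ge N$ makes the constraint $n\le 2x$ redundant, and the convention that $S_0$ sums only over $n\ge 1$ forces $R(n,x)=0$ for $n\le 1$, which kills the contributions from nonpositive indices $m$ in $T(-\alpha,N)$. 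Hence the integral equals $\sum_{n\le N} R(n,x)\,t(n)$.

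Second, I would invoke \eqref{1.51}. For every $n\le N$ we have $n\le x$ by hypothesis, so any decomposition $n=n_1+n_2$ with $n_1,n_2\ge 1$ automatically satisfies $n_1,n_2\le x$, and the truncation in the definition of $R(n,x)$ is inactive. The elementary computation recorded in \eqref{1.51}--\eqref{1.55} then gives $R(n,x)=r_2(n)-E_2(n)$ throughout $1\le n\le N$, and substituting this identity into the collapsed sum yields \eqref{eq1.6}. The only point that merits momentary care is the bookkeeping around the nonpositive indices in $T(-\alpha,N)$, handled by the vanishing observation noted above.
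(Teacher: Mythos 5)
Your proposal is correct and follows essentially the same route as the paper: expand both factors as trigonometric polynomials, apply orthogonality to collapse the integral to $\sum_{n\le N}R(n,x)t(n)$, and then use \eqref{1.51} (valid since $n\le N\le x$ makes the truncation in $R(n,x)$ inactive) to replace $R(n,x)$ by $r_2(n)-E_2(n)$. The paper states this almost without comment; your extra bookkeeping about nonpositive indices and the redundancy of $n\le 2x$ is just a more explicit version of the same argument.
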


For an unweighted average we use
\begin{equation} \label{eq1.7} I(\alpha, N) = \sum_{n\le N} e(n\alpha), \end{equation}
so that in Lemma 1 we take $t(n) = 1$ for $1\le n\le N$ and $t(n)=0$ otherwise.	
\begin{lemma} We have
\begin{equation}\label{eq1.8} \begin{split} \sum_{n\le N}  r_2(n) &= \frac12 N^2 -2\sum_{\rho} \frac{N^{\rho+1}}{\rho(\rho+1)} + \int_0^1 S_0(\alpha,x)^2 I(-\alpha,N)d\alpha\\ & \qquad  - \left(2\log 2\pi -\frac12\right)N +2\frac{\zeta '}{\zeta}(-1) -2\sum_{k=1}^\infty \frac{N^{1-2k}}{2k(2k-1)} ,\end{split} \end{equation}
where $\zeta(s)$ is the Riemann zeta-function and the sum is over the complex zeros $\rho$ of $\zeta(s)$.
\end{lemma}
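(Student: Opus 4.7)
The plan is to specialize Lemma~1 to the unweighted counting function. Taking $t(n)$ to be the indicator of $\{1,\dots,N\}$ makes $T(\alpha,N)=I(\alpha,N)$, so \eqref{eq1.6} rearranges instantly to
\[
\sum_{n\le N} r_2(n) \;=\; \sum_{n\le N} E_2(n) \;+\; \int_0^1 S_0(\alpha,x)^2\, I(-\alpha,N)\,d\alpha.
\]
Everything then reduces to evaluating the arithmetic sum $\sum_{n\le N}E_2(n)=2\sum_{n\le N}\psi(n-1)-\sum_{n\le N}(n-1)$ in closed form. The polynomial piece is immediate, $\sum_{n\le N}(n-1)=N(N-1)/2$.

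For the prime-counting piece I would exploit that $\psi$ is right-continuous and piecewise constant, equal to $\psi(k)$ on $[k,k+1)$. Integrating,
\[
\psi_1(N)\;:=\;\int_0^N\psi(t)\,dt\;=\;\sum_{k=0}^{N-1}\psi(k)\;=\;\sum_{n=1}^{N}\psi(n-1),
\]
so the sum I need is exactly the classical second Chebyshev function. This identification between the discrete sum and the continuous integral is the crucial reduction: once it is in place, the familiar explicit formula for $\psi_1$ takes over.

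That formula is obtained by shifting the Mellin--Perron integral
\[
\psi_1(x)\;=\;-\frac{1}{2\pi i}\int_{(c)}\frac{\zeta'(s)}{\zeta(s)}\,\frac{x^{s+1}}{s(s+1)}\,ds, \qquad c>1,
\]
past the singularities on the left and summing residues: $x^{2}/2$ from $s=1$; $-x^{\rho+1}/(\rho(\rho+1))$ from each nontrivial zero $\rho$; $-x\,\zeta'(0)/\zeta(0)=-x\log 2\pi$ from the pole of $1/s$ at $s=0$; $\zeta'(-1)/\zeta(-1)$ from the pole of $1/(s+1)$ at $s=-1$; and $-x^{1-2k}/(2k(2k-1))$ from each trivial zero $s=-2k$. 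Under RH the sum over $\rho$ is absolutely convergent since $|x^{\rho+1}/\rho(\rho+1)|\ll x^{3/2}/|\rho|^{2}$.

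The last step is pure bookkeeping: doubling the resulting expression for $\psi_1(N)$ and subtracting $N(N-1)/2$ collapses $N^{2}-N^{2}/2$ into $N^{2}/2$ and merges the two linear contributions into $-(2\log 2\pi-\tfrac12)N$, producing exactly \eqref{eq1.8}. There is no real analytic obstacle; the only delicate point is keeping the provenance of the residues at $s=0$ and $s=-1$ straight, since both come from the auxiliary denominator $s(s+1)$ rather than from $-\zeta'/\zeta$, and verifying that $\zeta'(0)/\zeta(0)=\log 2\pi$ to get the coefficient of $N$ correct.
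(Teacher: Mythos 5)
Your proposal is correct and follows essentially the same route as the paper: specialize Lemma~1 to the indicator weight, identify $\sum_{n\le N}\psi(n-1)$ with $\psi_1(N)$ via the piecewise-constancy of $\psi$, and substitute the explicit formula \eqref{eq3.11} (which the paper simply cites from Ingham rather than rederiving by Perron's formula as you do). The only cosmetic remark is that the absolute convergence of $\sum_\rho x^{\rho+1}/(\rho(\rho+1))$ already follows unconditionally from $\sum_\rho |\rho|^{-2}<\infty$, so RH is not needed for that step.
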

\begin{proof} To prove \eqref{eq1.8}, we use the function
\begin{equation}\label{eq1.9}  \psi_1(x) := \int_1^x \psi(u)\,du = \sum_{n\le x}(x-n)\Lambda(n),
\end{equation}
which by Theorem 28 of \cite{Ingham1932} or 12.1.1, Exercise 6 of  \cite{MontgomeryVaughan2007} has the explicit formula, for $x \ge 1$,
\begin{equation} \label{eq3.11}\psi_1(x)= \frac12 x^2 -\sum_{\rho}\frac{x^{\rho+1}}{\rho(\rho+1)} - (\log 2\pi)x +\frac{\zeta '}{\zeta}(-1) -\sum_{k=1}^\infty \frac{x^{1-2k}}{2k(2k-1)} , \end{equation}
where the sum is over the zeros of the Riemann zeta-function and is absolutely convergent. From \eqref{1.55} we have
\[\begin{split}\int_0^1 S_0(\alpha,x)^2 I(-\alpha,N)d\alpha &=  \sum_{n\le N}\Big(r_2(n)- 2\psi(n-1) +(n-1)\Big) \\ &= \sum_{n\le N}r_2(n) - 2\psi_1(N)+\frac12 (N-1) N ,\end{split} \]
and substituting the explicit formula for $\psi_1(x)$ proves \eqref{eq1.8}.
\end{proof}
We will also smooth with the Ces{\`a}ro weight by using the Fej{\'e}r kernel
\begin{equation} \label{Fejer} K(\alpha, N) = \sum_{|n|\le N} \left( 1-\frac{|n|}{N}\right)e(n\alpha)= \frac1N\left(\frac{\sin \pi N \alpha}{\sin \pi \alpha}\right)^2. \end{equation}
From Lemma 1 we now obtain the following result.
\begin{lemma} We have
\begin{equation}\label{eq1.12} \begin{split} \sum_{n\le N}\left(1 - \frac{n}{N}\right)  r_2(n) &= \frac16 N^2 -2\sum_{\rho} \frac{N^{\rho+1}}{\rho(\rho+1)(\rho +2)} + \int_0^1 S_0(\alpha,x)^2 K(\alpha,N)d\alpha\\ & \qquad +O(N).\end{split} \end{equation}
\end{lemma}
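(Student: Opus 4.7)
The plan is to apply Lemma 1 with the choice $t(n) = 1 - |n|/N$ for $|n| \le N$ (and $t(n)=0$ otherwise), so that $T(\alpha,N)$ becomes the Fej\'er kernel $K(\alpha,N)$ of \eqref{Fejer}. Since $K$ is real and even in $\alpha$, one has $K(-\alpha,N)=K(\alpha,N)$, and Lemma 1 gives
\[\sum_{n \le N}\bigl(r_2(n) - E_2(n)\bigr)\!\left(1 - \tfrac{n}{N}\right) = \int_0^1 S_0(\alpha,x)^2 K(\alpha,N)\,d\alpha.\]
The task thus reduces to showing that
\[\sum_{n \le N}E_2(n)\!\left(1 - \tfrac{n}{N}\right) = \frac{N^2}{6} - 2\sum_\rho \frac{N^{\rho+1}}{\rho(\rho+1)(\rho+2)} + O(N).\]

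Using the definition \eqref{1.55}, I would split the left-hand side as $2A(N) - B(N)$, where $A(N):=\sum_{n\le N}\psi(n-1)(1-n/N)$ and $B(N):=\sum_{n\le N}(n-1)(1-n/N)$. A direct polynomial computation yields $B(N) = (N-1)(N-2)/6 = N^2/6 + O(N)$. For $A(N)$, interchanging summation via $\psi(n-1)=\sum_{m \le n-1}\Lambda(m)$ gives
\[A(N) = \frac{1}{2N}\sum_{m \le N}\Lambda(m)(N-m)(N-m-1) = \frac{\psi_2(N)}{N} - \frac{\psi_1(N)}{2N},\]
where I set $\psi_2(N):=\int_0^N \psi_1(u)\,du = \tfrac12 \sum_{m \le N}\Lambda(m)(N-m)^2$. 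Under RH, $\psi_1(N)=N^2/2 + O(N^{3/2})$, so $\psi_1(N)/(2N) = O(N)$ and is negligible.

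Next I would integrate the explicit formula \eqref{eq3.11} for $\psi_1$ from $1$ to $N$ term by term (valid by absolute convergence of the $\rho$-series for $\psi_1$) to obtain
\[\psi_2(N) = \frac{N^3}{6} - \sum_\rho \frac{N^{\rho+2}}{\rho(\rho+1)(\rho+2)} - \frac{\log 2\pi}{2}\,N^2 + O(N),\]
with the new zero sum absolutely convergent under RH because $|\rho(\rho+1)(\rho+2)| \asymp |\gamma|^3$ and $\sum_\gamma |\gamma|^{-3} < \infty$. Dividing by $N$ and assembling $2A(N) - B(N)$, the polynomial parts combine as $N^2/3 - N^2/6 = N^2/6$, the zero sums give the desired $-2\sum_\rho N^{\rho+1}/(\rho(\rho+1)(\rho+2))$, and the $(\log 2\pi)N$ and remaining lower-order pieces are absorbed into $O(N)$, producing the claimed asymptotic and hence the lemma. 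I expect no serious obstacle here: the argument is a clean partial-summation reduction followed by one application of the explicit formula, with the only minor care being the justification of termwise integration and of the improved cubic-denominator zero sum, both immediate on RH.
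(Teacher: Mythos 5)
Your proof is correct and follows essentially the same route as the paper: both reduce the Ces{\`a}ro average of $E_2(n)$ to $\frac{1}{N}\int_1^N \psi_1(u)\,du$ (your $\psi_2(N)/N$) and then integrate the explicit formula \eqref{eq3.11} termwise to produce the cubic-denominator zero sum. The paper phrases the first step as $\sum_{n\le N}E_2(n)\left(1-\frac{n}{N}\right)=\frac1N\int_{1}^N\sum_{n\le u}E_2(n)\,du$ rather than swapping the order of summation directly, but the computation is the same.
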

\begin{proof} The main terms here arise from
\[ \begin{split} \sum_{n\le N} E_2(n)\left(1-\frac{n}{N}\right) &= \frac1{N}\int_{1}^N\sum_{n\le u}E_2(n)\, du \\&
= \frac1{N}\int_{1}^N\sum_{n\le u}(2\psi(n-1) -(n-1))\, du \\&
= \frac1{N}\int_{1}^N \left( 2 \psi_1(u) - \frac12 u^2 +O(u)\right)\, du \\&
=\frac1{N}\int_{1}^N \left( \frac12 u^2 -2 \sum_{\rho}\frac{u^{\rho+1}}{\rho(\rho+1)}+O(u)\right)\, du ,\end{split} \]
and \eqref{eq1.12} follows on substituting \eqref{eq3.11}.

\end{proof}

\section{ Gallagher's Lemma}
Gallagher introduced an important estimate for exponential sums. Let
\[R(\alpha) =\sum_\mu c(\mu) e(\mu \alpha), \quad \text{where} \quad \sum_\mu |c(\mu)| < \infty,\]
and, for $h>0$,
\[C(w,h)=\frac{1}{h}\sum_{\substack{\mu \\ |\mu-w|\le h/2}}c(\mu).\]
Then an easy calculation and Plancherel's theorem gives the identity
\begin{equation}\label{Gallagher} \int_{-\infty}^\infty \left| C(w,h)\right|^2\, dw =
\int_{-\infty}^\infty \left| R(\alpha) \frac{\sin(\pi h \alpha)}{\pi h \alpha}\right|^2 \, d\alpha . \end{equation}
Since $\frac{\sin x}{x}$ is positive and decreasing for $0<x<\pi$, we have
\begin{equation}\label{G-ineq}  \int_{-\frac{1}{2h}}^{\frac{1}{2h}}\left| R(\alpha)\right|^2 \, d\alpha \le \frac{\pi^2}{4}\int_{-\infty}^\infty \left| C(w,h)\right|^2\, dw .\end{equation}
which is one form of Gallagher's lemma.

In our applications we have that $\mu$ is an integer, and thus letting
\[ S(\alpha) = \sum_{n=-\infty}^\infty c_ne(n\alpha), \quad \text{where} \quad \sum_{n=-\infty}^\infty |c_n| < \infty,\]
then  \eqref{Gallagher} and \eqref{G-ineq} become
\begin{equation}\label{Thm1Gallagher} \int_{-\frac{1}{2h}}^{\frac{1}{2h}}\left| S(\alpha)\right|^2 \, d\alpha \ll
\int_{-\infty}^\infty \left| S(\alpha) \frac{\sin(\pi h\alpha)}{\pi h\alpha}\right|^2 \, d\alpha = \frac{1}{h^2}\int_{-\infty}^\infty\left|\sum_{x<n<x+h} c_n\right|^2 \, dx , \end{equation}
which is the inequality we need for Theorem 1. For Theorem 2 we take $h=H$ where $H$ is a positive integer, and since $S(\alpha +1) =S(\alpha)$, we see that
\[ \begin{split} \int_{-\infty}^{\infty} \left| S(\alpha) \frac{\sin(\pi H\alpha)}{\pi H\alpha}\right|^2 \, d\alpha &= \sum_{n=-\infty}^{\infty} \int_n^{n+1}\left|S(\alpha) \frac{\sin(\pi H\alpha)}{\pi H\alpha}\right|^2 \, d\alpha \\ &
= \sum_{n=-\infty}^{\infty} \int_0^{1}|S(\alpha)|^2 \left| \frac{\sin(\pi H(\alpha+n))}{\pi H(\alpha+n)}\right|^2 \, d\alpha \\ &
=  \int_0^{1}\left|S(\alpha)\right|^2 (\sin(\pi H\alpha)^2\sum_{n=-\infty}^{\infty}\frac{1}{\left(\pi H(\alpha+n)\right)^2} \, d\alpha .
\end{split} \]

Since
\[ \left(\frac{\pi}{\sin \pi \alpha}\right)^2= \sum_{n=-\infty}^\infty \frac{1}{(n+\alpha)^2},\]
see Example 10.7 of \cite{Montgomery},
we conclude by  \eqref{Fejer} that
\begin{equation} \int_0^1 |S(\alpha)|^2 K(\alpha,H)\, d\alpha = \frac1 H\int_0^1 |S(\alpha)|^2 \left(\frac{\sin(\pi H\alpha)}{\sin(\pi\alpha)}\right)^2 \, d\alpha = \frac 1 H\int_{-\infty}^\infty\left|\sum_{x<n<x+H} c_n\right|^2 \, dx . \end{equation} This last result, which we use in Theorem 2,  can be checked directly without using Plancherel's theorem.

\section{Bhowmik and Schlage-Puchta's estimate}
We take $S(\alpha)=S_0(x,\alpha)$ in \eqref{Thm1Gallagher} so that $c_n = \Lambda_0(n)$ for $1\le n\le x$ and $c_n=0$ otherwise.
Thus we have, for $1\le h \le x$,
\begin{equation}\begin{split}\label{5.1} \int_{-\infty}^\infty &\left|\sum_{x<n<x+h} c_n\right|^2 \, dx = \int_{-h}^x \bigg|\sum_{\substack{n\le x \\ t<n\le t+h}}\Lambda_0(n)\bigg|^2 \, dt\\ &=
 \int_{-h}^0 \bigg|\sum_{n\le t+h}\Lambda_0(n)\bigg|^2 \, dt +
\int_0^{x-h} \bigg|\sum_{t<n\le t+h}\Lambda_0(n)\bigg|^2\, dt +
\int_{x-h}^x \bigg|\sum_{t<n\le x}\Lambda_0(n)\bigg|^2\, dt .\end{split}\end{equation}
Now by \eqref{eq1.2}
\[  \sum_{a<n\le b}\Lambda_0(n)
= \psi(b)-\psi(a) - (b-a) +O(1),  \]
so that on substituting this into the integrals above and using the inequality  $ (A+B)^2 \le 2A^2 +2B^2$, we obtain the following result of Bhowmik and Schlage-Puchta.

\begin{lemma}[Bhowmik and Schlage-Puchta] We have, for $1\le h \le x$,
\begin{equation}\label{eq1.10} \int_{-\frac{1}{2h}}^{\frac{1}{2h}} |S_0(\alpha,x)|^2\, d\alpha \ll \frac{1}{h^2} \left(I(h) + J(x-h,h) + K(x, h)+x \right), \end{equation}
where
\begin{equation}  I(x) = \int_0^x\left(\psi(t)-t\right)^2\, dt,\end{equation}
\begin{equation} J(x,h) = \int_0^{x}\left(\psi(t+h)-\psi(t)-h\right)^2\, dt,\end{equation}
and
\begin{equation} K(x,h) = \int_{x-h}^x\left(\psi(x)-\psi(t)-(x-t)\right)^2\, dt.\end{equation}
\end{lemma}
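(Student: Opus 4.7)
The plan is to obtain the lemma as an immediate consequence of the Gallagher inequality \eqref{Thm1Gallagher} combined with the decomposition \eqref{5.1} and the elementary identity \eqref{eq1.2}. I would proceed in the following steps.

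First, I would apply \eqref{Thm1Gallagher} with $S(\alpha) = S_0(\alpha,x)$, i.e.\ with the coefficient sequence $c_n = \Lambda_0(n)$ for $1 \le n \le x$ and $c_n = 0$ otherwise. Since $c_n$ is supported in $[1,x]$, the $x$-integral on the right-hand side is supported on $[-h, x]$, and splitting it at $0$ and at $x-h$ yields exactly the three pieces in \eqref{5.1}. So it remains to bound each of these three integrals by the corresponding term on the right side of \eqref{eq1.10}, up to an additive $O(x)$ coming from error terms.

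Second, in each of the three ranges I would apply the identity coming from \eqref{eq1.2},
\[ \sum_{a<n\le b}\Lambda_0(n) = \psi(b) - \psi(a) - (b-a) + O(1), \]
with the appropriate choice of endpoints: $(a,b) = (0, t+h)$ on $[-h,0]$, $(a,b) = (t, t+h)$ on $[0,x-h]$, and $(a,b) = (t, x)$ on $[x-h,x]$. On the first range, the $\psi(0)=0$ term drops out and after the change of variable $u = t+h$ the integral becomes (essentially) $\int_0^h (\psi(u)-u)^2\,du + O(h)$, which is $I(h) + O(x)$. On the second range I get $J(x-h,h) + O(x)$ and on the third $K(x,h) + O(h)$, after using $(A+B)^2 \le 2A^2 + 2B^2$ to separate the main square from the $O(1)$ remainder.

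Third, I would collect the $O(1)^2$-type contributions: when integrated over their respective intervals (of lengths $h$, $x-h$, $h$) they produce a total of $O(x)$, using $h \le x$. Factoring out the $1/h^2$ from Gallagher's bound then yields precisely \eqref{eq1.10}.

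The whole argument is routine bookkeeping; the only step that requires any care is the first integral over $[-h,0]$, where one must recognize that changing variables turns the partial-sum integrand into $\psi(u)-u+O(1)$ on $[0,h]$, so that after squaring and using $(A+B)^2\le 2A^2+2B^2$ one recovers $I(h)$ and not some longer range. Once that observation is made, combining the three pieces with the $O(x)$ error gives the stated bound immediately.
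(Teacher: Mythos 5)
Your proposal is correct and follows essentially the same route as the paper: apply \eqref{Thm1Gallagher} with $c_n=\Lambda_0(n)$, split the $t$-integral into the three ranges as in \eqref{5.1}, insert $\sum_{a<n\le b}\Lambda_0(n)=\psi(b)-\psi(a)-(b-a)+O(1)$ from \eqref{eq1.2}, and absorb the $O(1)$ contributions into the $+x$ term via $(A+B)^2\le 2A^2+2B^2$. No issues.
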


\section{ Estimates for Primes on RH}

We see in Lemma 1 from either \eqref{eq1.6} or \eqref{eq1.7} that there isn't actually any dependence on $x$ aside from the condition $x\ge N$. We can remove this dependence explicitly by averaging over $x$ in either formula.  Define an expected value function by
\begin{equation}  E_N(f(\alpha)) = E_N(f(\alpha,x)) := \frac{1}{N}\int_N^{2N}f(\alpha,x)\, dx .\end{equation}
It is immediate that the following result holds.
\begin{lemma} We have
\begin{equation} \int_0^1 S_0(\alpha,x)^2 T(-\alpha,N)d\alpha = \int_0^1 E_N(S_0(\alpha)^2) T(-\alpha,N)d\alpha. \end{equation}
\end{lemma}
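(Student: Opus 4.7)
The plan is to exploit the fact that Lemma 1 expresses the left-hand side of \eqref{eq1.6} as a quantity that is manifestly independent of $x$: the sum $\sum_{n\le N}(r_2(n)-E_2(n))t(n)$ involves only $n\le N$ and makes no reference to $x$. Since the identity in Lemma 1 holds for every $x\ge N$, the function
\[
x \; \longmapsto \; \int_0^1 S_0(\alpha,x)^2\, T(-\alpha,N)\, d\alpha
\]
is constant on $[N,2N]$. This is the only real content; the rest is just averaging.

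Concretely, I would first fix $N$ and observe that by Lemma 1, for all $x\in[N,2N]$,
\[
\int_0^1 S_0(\alpha,x)^2\, T(-\alpha,N)\, d\alpha \;=\; \sum_{n\le N}\bigl(r_2(n)-E_2(n)\bigr)t(n),
\]
an expression independent of $x$. Averaging both sides in $x$ over $[N,2N]$ leaves the left side unchanged, so
\[
\int_0^1 S_0(\alpha,x)^2\, T(-\alpha,N)\, d\alpha \;=\; \frac{1}{N}\int_N^{2N}\!\!\int_0^1 S_0(\alpha,y)^2\, T(-\alpha,N)\, d\alpha\, dy.
\]
Then I would apply Fubini's theorem to interchange the order of integration. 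This is justified because $S_0(\alpha,y)$ and $T(-\alpha,N)$ are finite trigonometric polynomials, hence bounded on the compact region $[N,2N]\times[0,1]$. The interchange yields
\[
\frac{1}{N}\int_N^{2N}\!\!\int_0^1 S_0(\alpha,y)^2\, T(-\alpha,N)\, d\alpha\, dy \;=\; \int_0^1 \Bigl(\tfrac{1}{N}\!\int_N^{2N}S_0(\alpha,y)^2\, dy\Bigr) T(-\alpha,N)\, d\alpha,
\]
and the inner average is exactly $E_N(S_0(\alpha)^2)$ by definition.

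There is no real obstacle here: the crux is recognizing the $x$-independence forced by Lemma 1, and then the identity is a one-line Fubini computation. The usefulness of the lemma, of course, lies not in its proof but in its consequences: replacing $S_0(\alpha,x)^2$ by its average $E_N(S_0(\alpha)^2)$ should yield cancellation that improves later bounds (via Gallagher's lemma applied with the averaged kernel), which is presumably where the refinement over Bhowmik--Schlage-Puchta will come from.
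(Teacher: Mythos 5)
Your argument is correct and is exactly the one the paper intends: the text preceding the lemma notes that by Lemma 1 the integral is independent of $x$ for $x\ge N$, so averaging over $x\in[N,2N]$ and interchanging the order of integration (trivially justified for these bounded trigonometric polynomials) gives the identity, which the paper then declares ``immediate.'' Nothing to add.
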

Thus we are free to estimate this integral with the average value of $S_0(\alpha, x)^2$ if we choose.
\begin{lemma}  Assuming the Riemann Hypothesis, we have
\begin{equation} \label{cramer} I(x) = \int_0^x\left(\psi(t)-t\right)^2\, dt \ll x^2.\end{equation}
For $1\le h \le x$, we have
\begin{equation}\label{selberg} J(x,h) = \int_0^{x}\left(\psi(t+h)-\psi(t)-h\right)^2\, dt \ll hx(\log \frac {2x}{h} )^2,\end{equation}
and for $1\le h \le N$,
\begin{equation}\label{average} E_N(K(x,h))  = \frac{1}{N} \int_N^{2N} \int_{x-h}^x\left(\psi(x)-\psi(t)-(x-t)\right)^2\, dt\, dx \ll h N.\end{equation}
\end{lemma}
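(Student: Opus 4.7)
The first two estimates are classical results on the Riemann Hypothesis: \eqref{cramer} is Cram\'er's (1920) mean square bound for $\psi(t)-t$, and \eqref{selberg} is Selberg's (1943) mean square bound for primes in short intervals. Both are proved by inserting the truncated explicit formula for $\psi(t)-t$ into the integrand, squaring, and carefully controlling the resulting double sum over nontrivial zeros $\rho = \tfrac12 + i\gamma$: the diagonal contributions give the stated orders of magnitude via $\sum_\rho |\rho|^{-2} < \infty$ and standard pointwise bounds such as $|((t+h)^\rho - t^\rho)/\rho| \le \min(h/\sqrt{t},\, 2\sqrt{t}/|\gamma|)$ on RH, while the off-diagonal terms are smaller thanks to oscillation. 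We take these as given and focus on deducing \eqref{average} from \eqref{selberg}.

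The substitution $s = x-t$ in the inner integral defining $K(x,h)$ converts it to
\[ K(x,h) = \int_0^h \bigl(\psi(x) - \psi(x-s) - s\bigr)^2\, ds. \]
Inserting this into $E_N(K(x,h))$, swapping the order of integration, and then shifting $u = x-s$ in the $x$-integral gives
\[ E_N(K(x,h)) = \frac{1}{N}\int_0^h \int_{N-s}^{2N-s} \bigl(\psi(u+s) - \psi(u) - s\bigr)^2\, du\, ds \;\le\; \frac{1}{N}\int_0^h J(2N,s)\, ds, \]
since $[N-s, 2N-s] \subset [0,2N]$ when $0 \le s \le h \le N$. Applying \eqref{selberg} on the range $s \ge 1$ (and treating $0 \le s \le 1$ trivially, where $\psi(t+s)-\psi(t) \ll \log t$ contributes only $O(\log^2 N)$) yields
\[ E_N(K(x,h)) \;\ll\; \int_0^h s \log^2(2N/s)\, ds \;\ll\; h^2\log^2(2N/h). \]

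To finish, we verify that $h^2\log^2(2N/h) \ll hN$ for $1 \le h \le N$, i.e.\ that $f(h) := h\log^2(2N/h) \ll N$ uniformly. The derivative $f'(h) = \log(2N/h)(\log(2N/h)-2)$ vanishes on $(1, N]$ only at $h = 2N/e^2$, where $f = 8N/e^2$; the boundary values $f(1) = \log^2(2N)$ and $f(N) = N\log^2 2$ are also $\ll N$, so $f(h) = O(N)$ throughout the interval. This gives \eqref{average}. The main technical obstacle in the lemma is really hidden inside \eqref{selberg}, namely Selberg's delicate treatment of off-diagonal sums over pairs of zeros; once that bound is available, the derivation of \eqref{average} reduces, as above, to Fubini and a change of variables.
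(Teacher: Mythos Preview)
Your argument is correct, but for \eqref{average} you take a different route from the paper. The paper derives \eqref{average} directly from Cram\'er's bound \eqref{cramer}: it splits the inner integrand via $(\psi(x)-\psi(t)-(x-t))^2 \ll (\psi(x)-x)^2 + (\psi(t)-t)^2$, giving
\[
E_N(K(x,h)) \ll \frac{h}{N}\int_N^{2N}(\psi(x)-x)^2\,dx + \frac{1}{N}\int_{N-h}^{2N}(\psi(t)-t)^2\int_t^{t+h}dx\,dt \ll hN
\]
by \eqref{cramer} alone. Your approach instead converts the $x$-average into an average over the starting point $u$, bounds by $\tfrac{1}{N}\int_0^h J(2N,s)\,ds$, and invokes \eqref{selberg}. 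This is longer and uses the harder input \eqref{selberg}, but it yields the sharper intermediate estimate $E_N(K(x,h)) \ll h^2\log^2(2N/h)$, which beats $hN$ when $h$ is small relative to $N$; the paper's argument is quicker but gives only $hN$ directly. (A minor historical point: the precise form of \eqref{selberg} stated here is due to Saffari and Vaughan rather than Selberg, though Selberg introduced $J(x,h)$.)
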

The estimates \eqref{cramer} and \eqref{average} are sharp, while \eqref{selberg} can be improved assuming a pair correlation conjecture. The term $K(x,h)$ depends partly on the error in the prime number theorem from primes up to $x$, and we obtain an improved estimate here by averaging over $x$.

The estimate \eqref{cramer} is due to Cram\'er, see Theorem 13.5 of \cite{MontgomeryVaughan2007}. $J(x,y)$ has been much studied since Selberg \cite{Selberg} first introduced it. The estimate we state is due to Saffari and Vaughan, for a proof see \cite{GoldMont}.  For the last estimate, note the right-hand side is
\[\begin{split} &\ll  \frac{1}{N} \int_N^{2N} \int_{x-h}^x (\psi(x)- x)^2 +(\psi(t)-t)^2\, dt\, dx \\&
\ll  \frac{h}{N}\int_N^{2N} (\psi(x)- x)^2 \, dx + \frac{1}{N}\int_{N-h}^{2N}(\psi(t)-t)^2\int_{t}^{t+h}\, dx\, dt \\&
\ll hN \end{split}\]
on using \eqref{cramer}.

\section{ Proof of Theorems 1 and 2}
Combining the last two lemmas, we obtain the following estimate.
\begin{lemma} Assuming the Riemann Hypothesis, we have for $1\le h \le N$,
\begin{equation} \int_{-\frac{1}{2h}}^{\frac{1}{2h}} |E_N(S_0(\alpha)|^2 d\alpha  \ll \frac{N(\log N)^2}{h} .\end{equation}
\end{lemma}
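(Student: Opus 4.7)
The plan is to combine Lemma 4 (Bhowmik--Schlage-Puchta) with the conditional estimates of Lemma 6, invoking the averaging identity of Lemma 5 to trade the $x$-dependence of Lemma 4 for an average of $x$ over $[N,2N]$. Concretely, I would apply Lemma 4 at a fixed $x\in[N,2N]$ and $1\le h\le N$ to obtain
\[
\int_{-1/(2h)}^{1/(2h)}|S_0(\alpha,x)|^2\,d\alpha \;\ll\; \frac{1}{h^2}\Bigl(I(h)+J(x-h,h)+K(x,h)+x\Bigr),
\]
then average both sides over $x\in[N,2N]$ and interchange the order of integration on the left, producing a bound for $\int E_N(|S_0(\alpha,x)|^2)\,d\alpha$ in terms of $h^{-2}$ times $I(h)+E_N(J(x-h,h))+E_N(K(x,h))+N$.

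Next I would insert the RH estimates of Lemma 6. Cram\'er's bound gives $I(h)\ll h^2$; the Saffari--Vaughan estimate gives $J(x-h,h)\ll hx(\log(2x/h))^2$, and since $1\le h\le N$ and $x\le 2N$ we have $\log(2x/h)\ll \log N$ uniformly, so $E_N(J(x-h,h))\ll hN(\log N)^2$; and $E_N(K(x,h))\ll hN$ is exactly the averaged estimate from Lemma 6. Because $h\ge 1$, the term $hN(\log N)^2$ dominates $h^2$, $hN$, and $N$, and division by $h^2$ yields the claimed $N(\log N)^2/h$.

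If the left-hand side is instead read as $|E_N(S_0(\alpha))|^2$, with the modulus on the outside, the same bound follows at once from the Cauchy--Schwarz inequality in the $x$-average, $|E_N(S_0(\alpha,x))|^2\le E_N(|S_0(\alpha,x)|^2)$, integrated in $\alpha$. The main obstacle is not really in this lemma itself but in the ingredients: the averaged estimate for $K(x,h)$, which without the $x$-average would only yield $h^2\log^2$-type bounds, is what makes the $J$-term the dominant one and therefore what makes the $(\log N)^2$ saving (rather than a higher power of $\log N$) possible. Once Lemma 6 is in hand, the present lemma is routine bookkeeping with the one small check that $\log(2x/h)\ll \log N$ is uniform across the range.
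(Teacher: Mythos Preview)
Your proposal is correct and matches the paper's approach exactly: the paper gives no proof beyond the line ``Combining the last two lemmas,'' and what you have written is precisely the intended combination of Lemma~4 with the RH estimates of Lemma~6, averaged over $x\in[N,2N]$. Your handling of the notational ambiguity and the uniform bound $\log(2x/h)\ll\log N$ are both fine.
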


\begin{proof}[Proof of Theorem 1]

 By Lemma 1 and 2 it suffices to show that
\begin{align}\label{estimate}\int_0^1E_N(|S_0(\alpha)|^2)|I(-\alpha,N)|d\alpha = \int_{-\frac12}^{\frac12}E_N(|S_0(\alpha)|^2)|I(-\alpha,N)|d\alpha\ll N(\log N)^3.\end{align}

We use the estimate $I(\alpha,N) \ll \min(N,\frac{1}{||\alpha ||})$. First, for the range $[-\frac{1}{N},\frac{1}{N}]$
\[ \int_{-\frac1{N}}^{\frac1{N}}E_N(|S_0(\alpha)|^2)|I(-\alpha,N)|d\alpha \ll N \int_{-\frac1{N}}^{\frac1{N}}E_N(|S_0(\alpha)|^2)d\alpha\ll N (\log N)^2 \]
by Lemma 7. We split the rest of the integration range into  dyadic segments, a typical one is
\begin{align*}\int_{\frac{2^k}{N}}^{\frac{2^{k+1}}{N}}E_N(|S_0(\alpha)|^2)|I(-\alpha,N)| d\alpha
&\ll \int_{\frac{2^k}{N}}^{\frac{2^{k+1}}{N}}E_N(|S_0(\alpha)|^2)\frac{ d\alpha}{\alpha}\\
&\ll \frac{N}{2^k}\int_{-\frac{2^{k+1}}{N}}^{\frac{2^{k+1}}{N}}E_N(|S_0(\alpha)|^2) d\alpha \\
&\ll N(\log N)^2\end{align*}
 by Lemma 7.   Because there are $O\left(\log N\right)$ such $k$, we obtain \eqref{estimate}.
\end{proof}

\begin{proof}[Proof of Theorem 2] By \eqref{Fejer}, \eqref{5.1} and Lemma 4 we have
\[ \int_0^1 |S_0(\alpha,x)|^2 K(\alpha,N)\, d\alpha \ll \frac{I(N) + J(x-N,N)+K(x,N)} N ,\]
and therefore by Lemmas 5 and 6
\[ \int_0^1 |S_0(\alpha,x)|^2 K(\alpha,N)\, d\alpha \ll N.\]
The result now follows from Lemma 3.
\end{proof}


\begin{thebibliography}{99}

\bibitem{Bhow} G. Bhowmik, J. -C. Schlage-Puchta, {\it Mean representation number of integers as the sum of primes,} Nagoya Math. J., {\bf 200} (2010), 27--33.

\bibitem{Fujii1} A. Fujii, {\it An additive problem of prime numbers,} Acta Arith. , {\bf 58} (1991), 173--179.

\bibitem{Fujii2} A. Fujii, {\it An additive problem of prime numbers. II} Proc. Japan Acad. ser. A Math. Sci. , {\bf 67} (1991), 248--252.

\bibitem{Fujii3} A. Fujii, {\it An additive problem of prime numbers.III} Proc. Japan Acad. ser. A Math. Sci. , {\bf 67} (1991), 278--283.

\bibitem{Giordano} George Giordano, {\it On the irregularity of the distribution of the sums of pairs of odd primes,}
International Journal of Mathematics and Mathematical Sciences {\bf 30:6} (2002) 377--381.

\bibitem{GoldMont} D. A. Goldston and H. L. Montgomery, {\it Pair correlation of zeros and primes in short
intervals,} Analytic Number Theory and Diophantine Problems (A. C. Adolphson and
et al., eds.), Proc. of a Conference at Oklahoma State University (1984), Birkhauser
Verlag, 1987, pp. 183--203.

\bibitem{Gran1} A. Granville, {\it Refinements of Goldbach's conjecture, and the generalized Riemann hypothesis,} Funct. Approx. Comment. Math., {\bf 38} (2008), 235--237.

\bibitem{Gran2} A. Granville, {\it Corrigendum to \lq \lq Refinements of Goldbach's conjecture", and the generalized Riemann hypothesis,} Funct. Approx. Comment. Math., {\bf 37} (2007), 159--173.

\bibitem{HLShahWilson} G. H. Hardy and J. E. Littlewood, {\it Note on Messrs. Shah and Wilson's paper entitled: \lq On an empirical formula connected with Goldbach's Theorem.'}, Proceedings of the Cambridge Philosophical Society, {\bf 19}, 245--254.  Reprinted as pp. 561--630 in {\it Collected Papers of G. H. Hardy}, Vol. I, Clarendon Press, Oxford University Press, Oxford, 1966.



\bibitem{HLGoldbach} G. H. Hardy and J. E. Littlewood, {\it Some problems of `Partitio numerorum'; III: On the expression of a number as a sum of primes}, Acta Math. {\bf 44} (1922), no. 1, 1--70. Reprinted as pp. 535--544 in {\it Collected Papers of G. H. Hardy}, Vol. I, Clarendon Press, Oxford University Press, Oxford, 1966.

\bibitem{HardyLittlewood1922} G. H. Hardy and J. E. Littlewood, {\it Goldbach's Theorem}, Mathmatisk Tidsskrift B, 1922, 1--16. Reprinted as pp. 545--560 in {\it Collected Papers of G. H. Hardy}, Vol. I, Clarendon Press, Oxford University Press, Oxford, 1966.

\bibitem{Ingham1932}A. E. Ingham,  {\it The Distribution of Prime Numbers} (Cambridge Tracts in Mathematics and Mathematical Physics 30, Cambridge Univ. Press, Cambridge, 1932).

\bibitem{Landau} E. Landau, {\it \"Uber die zahlentheoretische Funktion $\phi(n)$ und ihre Beziehung zum Goldbachschen
Satz,}  G�ttinger Nachrichten (1900), 177--186.

\bibitem{Lang-Zac1} A. Languasco and A. Zaccagnini, {\it The number of Goldbach representations of an integer,} Proc. Amer. Math. Soc. {\bf 140} (2012), 795--804.

\bibitem{Lang-Zac2} A. Languasco and A. Zaccagnini, {\it The Ces{\`a}ro average of Goldbach numbers,}  Forum Mathematicum, Volume {\bf 27}, Issue 4, (2013), 1945--1960.

\bibitem{Montgomery} Hugh L. Montgomery,{\it Early Fourier Analysis,} Pure and Applied Undergraduate Texts 22, American Mathematical Society, 2014.

\bibitem{MontgomeryVaughan1973}H. L. Montgomery and R. C. Vaughan, {\it Error terms in additive prime number theory,} Quart. J. Math. Oxford (2), {\bf 24} (1973), 207--216.

\bibitem{MontgomeryVaughan2007} H. L. Montgomery and R. C. Vaughan, {\it Multiplicative Number Theory}, Cambridge Studies in Advanced Mathematics {\bf 97}, Cambridge University Press, Cambridge, 2007.

\bibitem{Prachar} K. Prachar,  {\it On integers n having many representations as sum of two primes,} J. London Math. Soc. {\bf 29} (1954), 347--350.

\bibitem{Selberg} A. Selberg, {\it On the normal density of primes in small intervals, and the difference
between consecutive primes,} Arch. Math. Naturvid. {\bf 47} (1943), 87--105.

\end{thebibliography}
\end{document}